  \newcommand{\shorteqnote}[1]{ &  & \text{\small\llap{#1}}}
\newtheorem{thm}{Theorem}[section]
\newtheorem{cor}[thm]{Corollary}
\newtheorem{lem}[thm]{Lemma}
\newtheorem{prop}[thm]{Proposition}
\newtheorem{ques}[thm]{Question}
\newtheorem{setup}[thm]{Setup}
\newtheorem{theoremx}{Theorem}
\newtheorem*{theorem*}{Theorem}
\theoremstyle{definition}
\newtheorem{rem}[thm]{Remark}
\newtheorem{ex}[thm]{Example}
\newtheorem*{claim*}{Claim}
\theoremstyle{remark}
\numberwithin{equation}{thm}
\def\cm{\operatorname{CM}}
\def\rf{\operatorname{Ref}}
\def\cond{\mathfrak c}
\def\End{\operatorname{End}}
\def\Hom{\operatorname{Hom}}
\def\m{\mathfrak{m}}
\def\mod{\operatorname{mod}}
\def\syz{\Omega}
\def\tr{\operatorname{tr}}
\def\ad{\operatorname{add}}
\def\gldim{\operatorname{gldim}}
\newcommand{\ses}[3]{0 \to {#1} \to {#2} \to {#3} \to 0}
\begin{document}
%\allowdisplaybreaks
\setlength{\baselineskip}{15pt}
\title{Reflexive modules, self-dual modules and Arf rings}
%\date{September 4, 2014}
\author{Hailong Dao}
\address{Department of Mathematics, University of Kansas, Lawrence, KS 66045-7523, USA}
\email{hdao@ku.edu}
\urladdr{https://www.math.ku.edu/~hdao/}

\thanks{2010 {\em Mathematics Subject Classification.}  13C05, 13C14, 13H10, 13B22, 14B05, 14H20}
\thanks{{\em Key words and phrases.} Arf rings, reflexive modules, maximal Cohen--Macaulay modules}
\thanks{The author is partially supported by Simons Foundation Collaboration Grant 527316}
\dedicatory{Dedicated to Professor Joseph Lipman and his contributions to algebraic geometry and commutative algebra}
\begin{abstract}
We prove a tight connection between reflexive modules over a one-dimensional ring $R$ and its birational extensions that are self-dual as $R$-modules. Consequently, we show that a complete local reduced Arf ring has finitely many indecomposable reflexive modules up to isomorphism, which can be represented precisely by the local rings infinitely near it. We also  characterize Arf rings by the property that any reflexive module is self-dual. We give applications on dimension of subcategories and existence of endomorphism rings of small global dimension (non-commutative desingularizations). Our results indicate striking similarities between Arf rings in dimension one and rational singularities in dimension two from representation-theoretic and categorical perspectives. 
\end{abstract}
\maketitle
%\tableofcontents
%%%%%%%%%%%%%%%%%%%%%%%%%%%%%%%%%%%%%%%%%%%%%%%%%%
%\section*{Convention}

%%%%%%%%%%%%%%%%%%%%%%%%%%%%%%%%%%%%%%%%%%%%%%%%%%%%%%%%%%%%%%%
\section{Introduction}
Let $R$ be a local Cohen-Macaulay ring and let $\rf{R}$ be the category of finitely generated reflexive modules over $R$.  This paper is motivated by the recent work on $\rf{R}$  in \cite{dms}. In particular, we  aim to answer the following question raised there: 

\begin{ques}\cite[Question 7.16]{dms}\label{mainques}
When can we classify all modules in $\rf{R}$? When is $\rf{R}$ of finite representation type (i.e., has finitely many indecomposable objects up to isomorphism)? 
\end{ques}

The analogous questions for $\cm{R}$, the category of maximal Cohen-Macaulay $R$-modules, have been studied extensively and played an important role in the development of the area known as Cohen-Macaulay representation theory. Roughly speaking, for a commutative local  ring $R$, the ``size" of $\cm{R}$, or various subcategories, are intimately and subtly linked to the singularities of $R$.  For a comprehensive survey in this area up to early 2000s, we refer to the  book \cite{leuschke2012cohen}. Perhaps the most famous example of such phenomena comes from  certain algebraic versions of the McKay correspondence, see \cite{BD} for details. Note that over dimension two normal domains, which is the setting of the aforementioned McKay correspondence, $\rf R$ and $\cm R$ coincide.

It turns out that the above question has some very satisfying answers in dimension {\bf one}. They come, rather unexpectedly, from Arf rings, which are illustrious in the study of curves singularity, but have not been considered deeply from the representation-theoretic viewpoint. Following Lipman, \cite{lipman1971stable}, we say that a semi-local, equidimensional, Cohen-Macaulay $R$ of dimension one is Arf if any integrally closed ideal of height one $I$ is {\it  stable}, meaning there is an element $x\in I$ such that $xI=I^2$. The canonical reference for Arf rings is \cite{lipman1971stable}, but recently interest in them has been significantly resurgent among commutative algebraists, see \cite{CCCEGIN, CCGT, EGI, I}.

Recall also that a local ring {\it infinitely near to} $R$ is a local ring of one of the birational extensions of $R$ obtained from repeatedly blowing up the Jacobson radicals (see Section \ref{prelim} for details and references on these concepts). The set of infinitely near local rings of $R$ carries important information on the singularities of $R$ and their desingularization, and  for dimension one it is finite under mild conditions. One of our main results is: 

\begin{theoremx}\label{th1}
Let $R$ be a semi-local, reduced Arf ring of dimension one which is complete with respect to its Jacobson radical. Then the set of indecomposable reflexive $R$-modules is precisely the set of  local rings infinitely near to $R$, up to isomorphism. 

\end{theoremx}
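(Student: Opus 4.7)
The plan is to combine two ingredients developed earlier in the paper: first, the announced bijection between indecomposable reflexive $R$-modules and indecomposable birational extensions of $R$ that are self-dual as $R$-modules, and second, the characterization of the Arf property by the fact that every reflexive $R$-module is self-dual. Because $R$ is semilocal, reduced, one-dimensional and complete in its Jacobson radical, $Q(R)$ is a finite product of fields and every birational extension decomposes uniquely as a product of its localizations, so the ``indecomposable'' birational extensions are exactly the local ones.

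Taking these two inputs for granted, the theorem reduces to the purely algebraic statement that the local birational extensions of $R$ coincide with the local rings infinitely near to $R$ (the self-duality is automatic in the Arf setting). For this I would use Lipman's theorem that the blow-up $R_1$ of the Jacobson radical of a semilocal Arf ring is again Arf. Since $\overline{R}$ is a finite $R$-module (by reducedness and completeness), the tower $R \subsetneq R_1 \subsetneq R_2 \subsetneq \cdots \subseteq \overline{R}$ terminates, and the local rings appearing as localizations of the $R_i$ at their maximal ideals are by definition the local rings infinitely near to $R$. I would then proceed by induction on the length of $\overline{R}/R$: it suffices to show that any proper local birational overring $S \supsetneq R$ contains $R_1$, for then $S$ is a local birational overring of the Arf ring $R_1$, to which the induction hypothesis applies.

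The main obstacle is establishing the inclusion $R_1 \subseteq S$ for every proper local birational overring $S$. This is precisely where the Arf hypothesis enters: writing $\m$ for the Jacobson radical and choosing $x \in \m$ with $x\m = \m^2$ (stability of the integrally closed ideal $\m$), one has $R_1 = R[\m/x]$, so the problem becomes showing $\m/x \subseteq S$. The stability relation $\m^2 = x\m$, combined with the fact that $S$ strictly contains $R$ while being contained in the finite extension $\overline R$, should force this containment via a conductor-style argument.

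Once this inductive step is in hand, the theorem follows. The bijection is injective on the module side by a routine observation: if $S, T$ are local overrings of $R$ in $Q(R)$ and $\varphi\colon S \to T$ is an $R$-linear isomorphism, then $\varphi$ is multiplication by the unit $u = \varphi(1) \in Q(R)$, and the local hypotheses on both $S$ and $T$ force $u$ and $u^{-1}$ to be units in both, giving $S = T$.
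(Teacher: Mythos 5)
Your proposal has a genuine gap at its foundation. The first ``ingredient'' you take for granted --- a bijection between indecomposable reflexive $R$-modules and indecomposable self-dual birational extensions of $R$ --- is not established anywhere in the paper, and for general one-dimensional rings it is false: over $R=\mathbb C[[t^3,t^5]]$ (Gorenstein, finite CM type, not Arf; see \Cref{notArf}) the reflexive modules coincide with the maximal Cohen--Macaulay modules, and there are indecomposable ones of rank greater than one, which cannot be birational overrings. Showing that over an Arf ring every indecomposable reflexive module \emph{is} a ring is precisely the content of \Cref{th1}, so assuming it assumes the conclusion. The paper's actual mechanism is a trace-ideal descent (\Cref{refLem2}): for an indecomposable $M\in\rf R$, either $\tr(M)$ is not contained in some maximal ideal, in which case completeness (which splits $R$ into a product of local rings) forces $M$ to be a local factor of $R$; or $\tr(M)\subseteq J_R\subseteq\cond_R(A_1)$, whence $M$ is a module over $A_1=\End_R(J_R)$ by \Cref{faber} and is $A_1$-reflexive by \Cref{refLem1}, the latter applying because stability of $J_R$ makes $A_1$ self-$R$-dual (\Cref{stablechar}). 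Since $A_1$ is again Arf, one iterates up the finite tower $A_0\subsetneq\cdots\subsetneq A_l=\overline R$ and every indecomposable reflexive is caught as a local factor of some $A_i$, i.e.\ a local ring infinitely near $R$. Nothing in your outline substitutes for this descent step.

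A second problem is circularity: you invoke \Cref{Arfselfdual} (Arf $\Leftrightarrow$ all reflexives self-dual), but the paper derives the forward implication of that theorem \emph{from} \Cref{th1}, so it cannot be used as input here. The only self-duality actually needed --- that of $\End_R(J_R)$ as an $R$-module --- follows directly from stability of the Jacobson radical via \Cref{stablechar} and needs no appeal to \Cref{Arfselfdual}. Your remaining steps (that proper local birational overrings contain the first blow-up, and that $R$-isomorphic local overrings in $Q(R)$ coincide) concern the classification of overrings rather than of reflexive modules; even if carried out, they would not close the gap above, since one must first know that an arbitrary indecomposable reflexive module is an overring at all.
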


In particular if $R$ is an analytically unramified Arf ring then $\rf{R}$ is of finite representation type, see \Cref{unrami}. This consequence, with certain extra conditions on $R$ and the reflexive modules, have been independently and recently announced in \cite{IKu}. Yet another version of this result, from a different approach using stable trace ideals, will appear in \cite{DaoLindo}.

The key insight behind our \Cref{th1} is a tight connection between reflexive modules over $R$ and over a birational extension $S$ that is self-dual as an $R$-module, see \Cref{refLem1} and \Cref{refLem2}. As Lipman pointed out in his work on Arf rings, even if one cares only about local rings, understanding the semi-local case is crucial since the birational extensions used in various proofs, including the integral closure of $R$, might not be local but only semi-local.

Even in the special case of complete local domains, the finite representation type of $\rf{R}$ does not imply that $R$ is Arf, see \Cref{notArf}. However, our next main result offers a new representation-theoretic  characterization of Arf rings. 

\begin{theoremx}\label{Arfselfdual}
Let $R$ be a semi-local, Cohen-Macaulay, equidimensional ring of dimension one. Assume that the completion of $R$ with respect to its Jacobson radical is reduced (for instance, $R$ is an analytically unramified local ring of dimension one). The following are equivalent:
\begin{enumerate}
\item $R$ is Arf. 
\item Any reflexive $R$-module $M$ is self-dual. That is $M\cong \Hom_R(M,R)$. 
\end{enumerate}
\end{theoremx}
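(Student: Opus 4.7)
The plan is to prove the two implications separately, using Theorem~A for one direction and a direct stability extraction for the other. In both directions I would first reduce to the case where $R$ is complete with respect to its Jacobson radical: since $\widehat R$ is reduced, the Arf property, reflexivity, and the self-duality condition $M\cong\Hom_R(M,R)$ are all compatible with completion and descent/ascent under the faithfully flat map $R\to\widehat R$. So I may assume $R$ complete (and reduced).

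For (1)$\Rightarrow$(2), Theorem~A identifies every indecomposable reflexive $R$-module with a local ring $S$ infinitely near $R$, so it suffices to show each such $S$ is self-dual as an $R$-module, i.e.\ that $\Hom_R(S,R)=(R:_{Q}S)$ is isomorphic to $S$. The mechanism from \Cref{refLem1} and \Cref{refLem2} reduces this to principality of the conductor $(R:S)$ as an $S$-ideal. I would prove this by induction on the number of quadratic transformations producing $S$ from $R$: the base case follows from the stable-ideal characterization of Arf rings, since for an integrally closed stable $I$ with $xI=I^2$ and $B=\End_R(I)$ one has $I=xB$ and hence $(R:I)=x^{-1}(R:B)$, so self-duality of $I$ is equivalent to self-duality of $B$; the Arf property of each infinitely-near ring closes the induction.

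For (2)$\Rightarrow$(1), let $I$ be an integrally closed height-one ideal of $R$. In the analytically unramified $1$-dimensional CM setting such $I$ is reflexive, so by hypothesis $I\cong\Hom_R(I,R)=(R:_{Q}I)$. Any such $R$-linear isomorphism is realized by multiplication by some $q\in Q$ with $qI=(R:I)$, and since the endomorphism ring $B:=\End_R(I)=(I:_{Q}I)\subseteq Q$ acts on both $I$ and $(R:I)$ by multiplication in $Q$, this isomorphism is automatically $B$-linear. Combined with the identity $I\cdot(R:I)=qI^{2}\subseteq R$, a short calculation (or an application of \Cref{refLem2}) extracts a principal $B$-generator of $I$, producing $y\in I$ with $I=yB$; multiplying by $y$ gives $yI=I^{2}$, so $I$ is stable. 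Since this holds for every integrally closed height-one ideal, Lipman's criterion implies $R$ is Arf.

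The main obstacle is the inductive principality statement in the proof of (1)$\Rightarrow$(2): for every $S$ infinitely near $R$ the conductor $(R:S)$ must be principal in $S$. This is a relative-Gorensteinness statement for each step of Lipman's blow-up tower, and it is precisely the place where \Cref{refLem1} and \Cref{refLem2} do the real work, by transferring self-duality between successive levels of the tower.
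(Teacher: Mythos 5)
Your direction $(1)\Rightarrow(2)$ is essentially the paper's argument: reduce to the complete case, invoke \Cref{main_thm} to identify the indecomposable reflexives with the local rings infinitely near $R$, and transfer self-duality down the blow-up tower via \Cref{stablechar} and \Cref{refLem1}; your reformulation in terms of principality of the conductor $(R:A_i)$ as an $A_i$-ideal is correct and matches \Cref{stablechar}$(2)\iff(5)$.

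The direction $(2)\Rightarrow(1)$ is where you diverge from the paper, and your key step has a genuine gap. You take a single integrally closed ideal $I$, use its reflexivity and hypothesis $(2)$ to write $(R:_Q I)=qI$, set $y=q^{-1}\in I$, and assert that ``a short calculation'' yields $I=yB$ with $B=\End_R(I)$, whence $yI=I^2$. This is false as stated: the element $y$ furnished by the duality isomorphism need not witness stability. Concretely, let $R=k[[t^6,t^7,\ldots,t^{11}]]$ and $I=t^7\overline{R}$ (integrally closed). Then $B=(I:I)=\overline{R}$, $(R:I)=t^{-7}(R:\overline{R})=t^{-1}\overline{R}$, and \emph{every} isomorphism $I\to (R:I)$ is multiplication by a unit of $\overline R$ times $t^{-8}$, so $y$ has order $8$; but $yI=t^{15}\overline{R}\neq t^{14}\overline{R}=I^2$, and $I=t^7B\neq yB$. (Here $I$ is still stable, via $t^7$ rather than $y$, so this does not refute the implication itself --- it refutes your mechanism for proving it.) What you actually need is the lemma ``every integrally closed, self-dual ideal is stable,'' and that is a nontrivial statement requiring its own proof; \Cref{refLem2}, which you offer as an alternative, does not apply since it concerns only the Jacobson radical. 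The paper avoids this entirely: it uses hypothesis $(2)$ only for $J=J_R$, where $(R:J)=(J:J)$ holds and \Cref{stablechar} converts self-duality directly into stability, and then runs an induction on $\ell(\overline{R}/R)$, passing hypothesis $(2)$ down to $S=\End_R(J)$ via \Cref{refLem1} and concluding with \Cref{corinduct}. A secondary, smaller issue: your proposed reduction to the complete case in direction $(2)\Rightarrow(1)$ would require every reflexive $\widehat{R}$-module to descend to $R$, which is not justified; fortunately your subsequent argument works with ideals of $R$ directly and does not need it.
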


We give some applications of our main results on certain topics, such as dimension of subcategories and global dimension of endomorphism rings, see \Cref{corsyz}, \Cref{dimCM} and \Cref{corNCR}. To us, it is rather interesting that these corollaries show very strong resemblances between Arf rings in dimension one and rational singularities in dimension two, see the paragraph before \Cref{keyques}. Both of these topics were first deeply investigated by Lipman in \cite{lipmanrat, lipman1971stable} and have become heavily studied, yet such similarities have not been noticed before, to the best of our knowledge. Our results also suggest that over one dimensional rings, reflexive modules highly deserve further attention.

The paper is organized as follows. \Cref{prelim} presents notations and key preliminary results on birational extensions and self-dual modules. \Cref{mainproofs} give the proofs of \Cref{th1} and \Cref{Arfselfdual}. Finally, \Cref{apps} contains the main applications, examples and some open questions.\\

\noindent\textbf{Acknowledgements}: The author is grateful to Craig Huneke, Joseph Lipman and Kei-ichi Watanabe for some helpful correspondence on Arf rings and rational singularities, both recently and in the past. The methods and ideas of this paper were heavily inspired from reading Lipman's seminal works \cite{lipmanrat, lipman1971stable} and the author is greatly pleased to discover some new connections between them.

\section{Notations and preliminary results}\label{prelim}

We first fix some notations. Let $R$ be a commutative ring and $M$ a finitely generated $R$-module. Let $M^R$ denote $\Hom_R(M,R)$. Let $f_M$ denote the natural map $M\to M^{RR}$. $M$ is called reflexive if $f_M$ is an isomorphism.  An extension $R\to S$ is called finite birational 
if $S$ is a finitely generated $R$-module and the total rings of fractions are equal, $Q(S) =Q(R)$. The conductor of $S$ in $R$, denoted by $\cond_R(S)$, consists of elements of $R$ that multiply $S$ into $R$.

The trace ideal of $M$, denoted by $\tr(M)$, is the sum $\sum_{f\in \Hom_R(M,R)} f(M)$. When $R$ is local, this ideal is equal to $R$ if and only if $R$ is a direct summand of $M$. Let $\ad M$ denote the category of all summands of (finite) direct sums of $M$. For some background on trace ideals and their utility in studying reflexive modules, we refer to \cite{dms}. 

Next, we focus on the dimension one situation. The background materials are largely taken from \cite{lipman1971stable}. 
Assume that $R$ is semi-local, equidimensional Cohen-Macaulay of dimension one. Let $J_R$ denote the Jacobson radical of $R$, the intersection of all maximal ideals of $R$. Let $I$ be a height one ideal in $R$. The blow-up ring of $I$ in $R$ is defined as $B_R(I):= \cup_{n>0} I^n:_{Q(R)}I^n$. An ideal of $R$ is called stable if $B_R(I)= I:_{Q(R)}I$ (\cite[Definition 1.3]{lipman1971stable}). Equivalently, $I$ is stable if there is an element $x\in I$ such that $xI=I^2$ (cf. \cite[Lemma 1.11]{lipman1971stable}).  $R$ is called an Arf ring if all integrally closed ideal of height one is stable (cf. \cite[Theorem 2.2]{lipman1971stable}). Note that $J_R$ is integrally closed, so it is stable if $R$ is Arf. 

Finally, from $R$ one can build a tower of birational extensions as follows: $A_0=R, A_{i+1} = B_{A_i}(J_{A_i})$ for $i>0$.  If the integral closure $\overline R$ of $R$ in $Q(R)$ is a finite $R$-module, then this chain ends at some $A_l=\overline R$. A local ring $R'$ is called infinitely near to $R$ if it is of the form $R'=(A_i)_P$ for some $A_i$ and a  maximal ideal $P$ of $A_i$.

 We now recall or prove the key preliminary results. 

\begin{thm}\label{faber}
Let $R$ be a Noetherian ring and $M$ be a finite $R$-module. Let $S$ be a finite birational extension of $R$. Consider the following statements.
\begin{enumerate}
    \item $M$ is a module over $S$.
    \item $\tr(M)\subseteq \cond_R(S)$.
\end{enumerate}
Then $(1)$ implies $(2)$. The converse is true if $M$ is a reflexive $R$-module. 

\end{thm}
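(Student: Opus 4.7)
The plan is to handle the two implications separately, with the direction $(1)\Rightarrow(2)$ being a direct verification and the direction $(2)\Rightarrow(1)$ using reflexivity to transport an $S$-action from the bidual back to $M$.

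For $(1)\Rightarrow(2)$, assume $M$ carries an $S$-module structure extending its $R$-action. I want to show that for every $f\in\Hom_R(M,R)$ and every $m\in M$, the element $f(m)$ belongs to $\cond_R(S)$, i.e.\ $sf(m)\in R$ for all $s\in S$. Since $S$ is a finite birational extension of $R$, we have $S\subseteq Q(R)$ and I can write $s=a/b$ with $a,b\in R$ and $b$ a nonzerodivisor. Because $M$ is an $S$-module, $sm\in M$ and $b(sm)=am$ in $M$, so applying the $R$-linear map $f$ gives $bf(sm)=af(m)$ in $R$. Multiplying through by $b^{-1}$ inside $Q(R)$ yields $f(sm)=sf(m)$, and since $f(sm)\in R$ this shows $sf(m)\in R$, proving $f(m)\in\cond_R(S)$.

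For $(2)\Rightarrow(1)$ under the reflexivity hypothesis, I use the identification $M\cong M^{RR}$ via $f_M$, so I can view each $m\in M$ as the evaluation functional $\hat m:M^R\to R$, $\hat m(f)=f(m)$. Given $s\in S$, I would define $s\cdot m\in M^{RR}$ by $(s\cdot m)(f):=sf(m)$ for $f\in M^R$. The key point is that this map really lands in $R$: by definition $f(m)\in\tr(M)$, and the hypothesis $\tr(M)\subseteq\cond_R(S)$ then forces $sf(m)\in R$. The $R$-linearity of $s\cdot m$ as a map $M^R\to R$ is immediate from commutativity of scalars, so $s\cdot m\in M^{RR}=M$ via $f_M^{-1}$. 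I would then verify that this assignment is bilinear, associative in $S$, and reduces to the original $R$-action when $s\in R$, which are all short computations using only commutativity of scalars on $Q(R)$ and the fact that $f_M$ identifies $rm$ with the functional $f\mapsto rf(m)$.

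I do not expect a serious obstacle here; the statement essentially isolates the defining property of the conductor with respect to the trace ideal. The mildly delicate point is making sure the argument in $(1)\Rightarrow(2)$ is valid even when $R$ is not a domain, which is handled by working inside the total ring of fractions $Q(R)=Q(S)$ and exploiting that any $s\in S$ has a denominator that is a nonzerodivisor in $R$. Everything else is routine bookkeeping with the canonical map $f_M$.
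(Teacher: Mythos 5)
Your argument is correct. Note that the paper does not actually prove this statement; it simply cites \cite[Theorem 2.9]{dms}, so your write-up is a genuine standalone proof rather than a variant of an in-paper argument. Both directions are sound: for $(1)\Rightarrow(2)$ the computation $bf(sm)=af(m)$ with $b$ a nonzerodivisor, carried out inside $Q(R)$, correctly yields $sf(m)=f(sm)\in R$, and this is exactly the point that makes the argument work for non-domains; for $(2)\Rightarrow(1)$ the functional $f\mapsto sf(m)$ lands in $R$ precisely because $f(m)\in\tr(M)\subseteq\cond_R(S)$, it is visibly $R$-linear, and transporting it back through $f_M^{-1}$ gives the $S$-action. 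The remaining verifications (additivity, associativity $(ss')\cdot m=s\cdot(s'\cdot m)$, and compatibility with the original $R$-action) are indeed the routine computations you describe, the only mildly nontrivial one being associativity, which uses that $f_M$ identifies $s'\cdot m$ with the functional $f\mapsto s'f(m)$ so that iterating gives $f\mapsto ss'f(m)$. This is the natural proof and, as far as one can tell, essentially the argument the cited reference must contain.
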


\begin{proof}
See \cite[Theorem 2.9]{dms}.
\end{proof}

\begin{lem}\label{M**}
If $M$ is a finitely generated module and $M\cong M^{RR}$ then $M$ is reflexive. 
\end{lem}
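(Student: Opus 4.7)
The subtle point is that the hypothesis only gives an abstract isomorphism $M\cong M^{RR}$, whereas reflexivity demands that the specific natural map $f_M$ be an isomorphism. My plan is to first upgrade the statement from $M$ to $M^R$ using a split-injection plus cancellation argument, and then pass the result back to $M$ using duality and naturality.

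The key technical tool is the triangle identity: for any finitely generated $N$, unwinding definitions gives $(f_N)^R\circ f_{N^R}=\id_{N^R}$, so $f_{N^R}\colon N^R\to N^{RRR}$ is always a split injection. Applied to $N=M$, this shows that $f_{M^R}$ embeds $M^R$ as a direct summand of $M^{RRR}$. Dualizing the hypothesis $M\cong M^{RR}$ yields $M^{RRR}\cong M^R$, and writing $M^{RRR}\cong f_{M^R}(M^R)\oplus C$ gives $M^R\cong M^R\oplus C$ for $C=\cok f_{M^R}$. Since $R$ is Noetherian and everything in sight is finitely generated, localizing at any maximal ideal in $\supp C$ and comparing minimal numbers of generators via Nakayama forces $C=0$. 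Hence $f_{M^R}$ is an isomorphism, i.e.\ $M^R$ is reflexive.

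To finish, I would invoke two elementary closure properties. First, reflexivity passes to the dual: if $N$ is reflexive then $(f_N)^R$ is an isomorphism, and the triangle identity exhibits $f_{N^R}$ as its inverse, so $N^R$ is reflexive. Applying this to $N=M^R$ shows that $M^{RR}$ is reflexive. Second, reflexivity passes across isomorphisms: for any isomorphism $\phi\colon M\to N$, naturality of $f$ gives $f_N\circ\phi=\phi^{RR}\circ f_M$, so $f_M=(\phi^{RR})^{-1}\circ f_N\circ\phi$ is an isomorphism whenever $f_N$ is. Combining, any abstract isomorphism $M\cong M^{RR}$ together with reflexivity of $M^{RR}$ yields reflexivity of $M$. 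The only non-trivial step is the cancellation $M^R\cong M^R\oplus C\Rightarrow C=0$, which is the main obstacle, but it is handled by the standard Nakayama argument in the Noetherian setting implicit in the paper.
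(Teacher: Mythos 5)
Your proof is correct and rests on the same two ingredients as the paper's: the triangle identity $(f_N)^R\circ f_{N^R}=\id_{N^R}$ making the relevant biduality map a split injection, followed by Noetherian cancellation of the cokernel. The only difference is organizational --- the paper applies the splitting with $N=M^R$ and transfers directly to $f_M$ via naturality, whereas you apply it with $N=M$, first establish that $M^R$ (hence $M^{RR}$) is reflexive, and then transfer to $M$; this is a slightly longer route to the same conclusion.
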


\begin{proof}
Let $N=M^R$. Since $f_{N^R}: N^R \to N^{RRR} $ splits (dualizing the natural map $f_N$ gives the opposite map), it follows that the map $f:M\to M^{RR}$ splits. But as $M\cong M^{RR}$ and $M$ is Noetherian, $f$ is an isomorphism. 
\end{proof}

\begin{prop}\label{birHom}
Let $R\to S$ be a finite birational extension of one-dimensional Cohen-Macaulay rings. If $M,N$ are maximal Cohen-Macaulay $S$-modules, then $\Hom_R(M,N)=\Hom_S(M,N)$. 
\end{prop}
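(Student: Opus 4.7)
The inclusion $\Hom_S(M,N)\subseteq \Hom_R(M,N)$ is automatic, so the content of the proposition is the reverse containment: every $R$-linear map $f\colon M\to N$ is in fact $S$-linear. My plan is to prove this by a conductor/clearing-denominators trick, using the assumption that $R\to S$ is birational.

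First I would fix $f\in \Hom_R(M,N)$, $s\in S$ and $m\in M$, and try to show $f(sm)=sf(m)$. Because $R\to S$ is finite birational, we have $S\subseteq Q(S)=Q(R)$, so there exist $a\in R$ and a nonzerodivisor $c\in R$ with $s=a/c$, i.e.\ $cs=a\in R$. Now compute, using $R$-linearity of $f$ and the fact that $a=cs\in R\subseteq S$ acts the same way whether viewed over $R$ or $S$:
\[
c\,f(sm)=f(c\cdot sm)=f(a m)=a f(m)=cs\,f(m)=c\,(sf(m)).
\]
Hence $c\bigl(f(sm)-sf(m)\bigr)=0$ in $N$.

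The remaining issue — and, I expect, the only subtle point — is to show that $c$ acts as a nonzerodivisor on $N$, so that we may cancel and conclude $f(sm)=sf(m)$. Because $c$ is a nonzerodivisor in $R$, it is a unit or at any rate a nonzerodivisor in $Q(R)=Q(S)$; in particular $c$ is a nonzerodivisor in $S$. Since $S$ is one-dimensional Cohen--Macaulay and $N$ is maximal Cohen--Macaulay over $S$, $N$ is a torsion-free $S$-module, so every nonzerodivisor of $S$ (equivalently, every element of $S$ avoiding the minimal primes of $S$) is a nonzerodivisor on $N$. Applying this to $c$ finishes the argument.

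Thus the whole proof reduces to one line of computation together with the standard fact that MCM modules over a one-dimensional CM ring are torsion-free; I don't anticipate any further obstacles, and the argument does not require $M$ or $N$ to be reflexive, only that $N$ be torsion-free over $S$.
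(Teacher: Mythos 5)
Your argument is correct and is essentially the proof the paper relies on: the paper simply cites \cite[Lemma 4.14]{leuschke2012cohen}, whose proof is exactly this clearing-denominators computation $c\,f(sm)=c\,s f(m)$ followed by cancellation of the nonzerodivisor $c$, justified by torsion-freeness of the maximal Cohen--Macaulay module $N$ over the one-dimensional Cohen--Macaulay ring $S$. You have correctly identified the one subtle point (that $c$ acts injectively on $N$) and handled it the same way.
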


\begin{proof}
See \cite[4.14]{leuschke2012cohen}, note that the proof there does not use the local assumption.
\end{proof}

\begin{prop}\label{proj}
Let $R$ be a semi-local ring and $M$ a finitely generated projective module. Then $M$ is self-$R$-dual, that is $M\cong M^R$.

\end{prop}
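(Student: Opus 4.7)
The plan is to reduce the problem modulo the Jacobson radical $J = J(R)$, where the situation is controlled by semisimple ring theory, and then lift the resulting isomorphism back to $R$ using projectivity and Nakayama's lemma. Since $R$ is commutative and semi-local, $R/J$ is isomorphic to a finite product of fields $k_1\times\cdots\times k_s$, and every finitely generated $R/J$-module is of the form $V_1\times\cdots\times V_s$ for finite-dimensional $k_i$-vector spaces $V_i$. Such a module is visibly self-dual since each $V_i$ is self-dual over $k_i$.

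First I would observe that $M^R$ is projective (being the dual of a finitely generated projective), and compute the reduction of $M^R$ modulo $J$. Because $M$ is projective (in particular, finitely presented and flat), the standard base-change formula gives
\[
(M^R)/J(M^R) \;\cong\; \Hom_R(M,R)\otimes_R R/J \;\cong\; \Hom_{R/J}(M/JM,\,R/J).
\]
Combined with the preceding remark on semisimple dualities, this yields an $R/J$-isomorphism $M/JM \cong (M^R)/J(M^R)$.

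Next I would upgrade this to an honest isomorphism $M \cong M^R$ via the following standard lemma, which I would prove inline: \emph{over a commutative semi-local ring $R$, two finitely generated projective modules $P$ and $Q$ are isomorphic iff $P/JP \cong Q/JQ$}. Given an isomorphism $\bar{\varphi} \colon P/JP \to Q/JQ$, projectivity of $P$ lifts the composition $P\twoheadrightarrow P/JP \xrightarrow{\bar\varphi} Q/JQ$ to a map $\varphi\colon P\to Q$; Nakayama forces $\varphi$ to be surjective, and since $Q$ is projective the exact sequence splits as $P \cong \ker\varphi \oplus Q$. Reducing mod $J$ and comparing with the isomorphism $\bar\varphi$ shows $\ker\varphi = J\ker\varphi$, and one more application of Nakayama gives $\ker\varphi = 0$. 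Applying this lemma to $P=M$ and $Q=M^R$ finishes the proof.

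The only mildly delicate step is the lemma, and the main thing to be careful about is verifying that the kernel of the lifted map really does vanish modulo $J$ (i.e., that in the splitting $P\cong \ker\varphi\oplus Q$, the summand $\ker\varphi$ is annihilated upon reduction). This is immediate once one identifies the induced map mod $J$ with the isomorphism $\bar{\varphi}$, but it is the one place where the argument genuinely uses both projectivity and the semi-local hypothesis together, rather than either in isolation.
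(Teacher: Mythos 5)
Your argument is correct, but it takes a genuinely different route from the paper's. The paper's proof is a two-line reduction: write $R$ as a finite product of semi-local rings each with connected spectrum (possible because $\spec R$ has only finitely many connected components, each containing a maximal ideal), observe that over each factor the finitely generated projective module is free of constant rank, and conclude since free modules are visibly self-dual. You instead pass to $R/J$: there $R/J$ is a finite product of fields, duals commute with the base change $R\to R/J$ for finitely generated projectives, finite-dimensional vector spaces are self-dual, and finitely generated projectives over $R$ are determined by their reductions modulo $J$ (lift, Nakayama, split, Nakayama again). Your route is longer but more self-contained: the fact the paper invokes silently --- that a finitely generated projective module over a semi-local ring with connected spectrum is free --- is usually proved by exactly the lifting lemma you establish inline, applied with $P=R^n$. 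One small correction to your closing remark: that lifting lemma does not actually use semi-locality; it holds for finitely generated projectives over any commutative ring with $J$ its Jacobson radical, since Nakayama applies to finitely generated modules in that generality and split exact sequences stay exact under reduction. The semi-local hypothesis enters only to guarantee that $R/J$ is a finite product of fields, which is what makes $M/JM$ and $(M^R)/J(M^R)$ isomorphic; this is also where the statement genuinely fails without semi-locality (a non-principal ideal of a Dedekind domain is projective but need not be self-dual).
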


\begin{proof}
We can write $R$ as product of semi-local rings $R_i$s, each of them has connected spectrum. On each $R_i$, $M$ is finite free, thus the assertion follows. 
\end{proof}

The following is our key technical result, which relates reflexive modules over $R$ to the reflexive modules over self-dual birational extensions. 

\begin{lem}\label{refLem1}
Let $R\to S$ be a finite birational extension of one-dimensional Cohen-Maculay rings such that $\Hom_R(S,R) \cong_R S$. Let $M$ be a maximal Cohen-Macaulay $S$-module. Then $\Hom_R(M,R)\cong \Hom_S(M,S)$, both as $R$ and $S$ modules.  In particular, a $S$-module  is $R$-reflexive  if and only if it is $S$-reflexive. 
\end{lem}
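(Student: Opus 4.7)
The plan is to first upgrade the given $R$-isomorphism $\Hom_R(S,R)\cong S$ to an $S$-isomorphism, and then deduce everything from the standard tensor--hom adjunction.

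First I would observe that $S$ is MCM as an $R$-module, since $R\to S$ is a finite birational extension with $R$ one-dimensional Cohen--Macaulay, hence $S\subset Q(R)$ is finite and torsion-free over $R$. Then $\Hom_R(S,R)$ is $R$-reflexive (being an $R$-dual), in particular torsion-free, hence also MCM over $R$. As finitely generated $S$-modules on a one-dimensional $S$, they are MCM over $S$ as well. Now invoke \Cref{birHom}: any $R$-linear map between two MCM $S$-modules is automatically $S$-linear. Therefore the assumed $R$-isomorphism $\Hom_R(S,R)\cong S$ is in fact an isomorphism of $S$-modules.

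Next I would apply the tensor--hom adjunction
\[
\Hom_R(M,R)\;=\;\Hom_R(M\otimes_S S,\,R)\;\cong\;\Hom_S\bigl(M,\,\Hom_R(S,R)\bigr),
\]
which is natural in the $S$-module $M$. Composing this natural $S$-isomorphism with the $S$-isomorphism from the previous step yields $\Hom_R(M,R)\cong \Hom_S(M,S)$, both as $S$- and as $R$-modules, which is the main assertion.

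For the reflexivity consequence, note that $\Hom_S(M,S)$ is again a MCM $S$-module when $M$ is, so we may apply the displayed isomorphism a second time to $\Hom_S(M,S)$ in place of $M$; this gives
\[
M^{RR}\;\cong\;\Hom_R\bigl(\Hom_S(M,S),R\bigr)\;\cong\;\Hom_S\bigl(\Hom_S(M,S),\,S\bigr).
\]
The only thing left is to verify that under this identification the $R$-biduality map $f_M\colon M\to M^{RR}$ corresponds to the $S$-biduality map $M\to \Hom_S(\Hom_S(M,S),S)$; this is the step I would expect to require the most care, but it is a routine naturality check for the tensor--hom adjunction, since both maps are adjoint to the evaluation pairing $M\otimes_S\Hom_S(M,S)\to S$ (respectively, $\to \Hom_R(S,R)$ under the isomorphism of the first paragraph). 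It then follows that $f_M$ is an isomorphism if and only if the $S$-biduality map is, which is the desired equivalence.
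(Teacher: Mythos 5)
Your argument for the main isomorphism is essentially the paper's: both proofs run the tensor--hom adjunction $\Hom_R(M,R)\cong\Hom_S(M,\Hom_R(S,R))$ and use \Cref{birHom} to convert the hypothesis $\Hom_R(S,R)\cong_R S$ into the needed $S$-linear statement (you do this by upgrading the isomorphism $\Hom_R(S,R)\cong S$ to an $S$-isomorphism up front; the paper instead moves back and forth between $\Hom_R$ and $\Hom_S$ inside the chain of identifications --- the same mechanism). Where you genuinely diverge is the reflexivity consequence. You propose to track the biduality map through the identifications and verify that $f_M$ corresponds to the $S$-biduality map; you correctly flag this as the delicate step, since the non-canonical isomorphism $\Hom_R(S,R)\cong S$ enters twice in the double dual and one must check the resulting square commutes. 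This can be done, but the paper avoids it entirely: by \Cref{M**}, the mere existence of an abstract isomorphism $M\cong M^{RR}$ (not necessarily the biduality map) already forces $M$ to be reflexive, because $f_M$ splits and a surjective endomorphism of a Noetherian module is an isomorphism. So from $M^{RR}\cong M^{SS}$ one gets both directions of the equivalence with no naturality check at all. Your route is more honest about which map is an isomorphism and generalizes to settings where one wants the specific map; the paper's route is shorter and eliminates the one step you yourself identified as requiring the most care.
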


\begin{proof}
Let $M$ be an $S$-module. Recall the notation  $M^R = \Hom_R(M,R)$. We have:
\begin{flalign*} 
& M^R = \Hom_R(M,R)  \\
& = \Hom_R(M\otimes_SS,R)  \\
&\cong \Hom_S(M,S^R)  \shorteqnote{(Hom-tensor adjointness)}\\ 
&= \Hom_R(M,S^R)   \shorteqnote{(by \Cref{birHom} )} \\
&\cong_R \Hom_R(M,S)  \shorteqnote{(by assumption)}\\
&= \Hom_S(M,S) \shorteqnote{(by \Cref{birHom})}\\
& = M^S
\end{flalign*}

\Cref{birHom} then implies that $M^R\cong_S M^S$ as well. Apply this twice gives $M^{RR} \cong M^{SS}$ both as $R$ and $S$ modules.  Finally  \Cref{M**} completes the proof. 
\end{proof}

\begin{rem}
The condition $\Hom_R(S,R) \cong_R S$ is rather weak. Without the birational assumption, \Cref{refLem1} will fail spectacularly. Take $R$ to be a DVR and $S$ a Cohen-Macaulay but not Gorenstein finite extension of $R$. Then $S\cong_RR^r$ for some $r>1$, so $\Hom_R(S,R) \cong_R S$. Every CM $S$-module is $R$-free and hence $R$-reflexive, but they can't all be $S$-reflexive as $S$ is not Gorenstein.  
\end{rem}

\begin{thm}\cite[Theorem 2.2]{lipman1971stable}\label{LipThm}
$R$ is Arf if and only if any local ring $B$ infinitely near $R$ has minimal multiplicity (the Hilbert-Samuel multiplicity of $B$ is equal to its embedding dimension). 
\end{thm}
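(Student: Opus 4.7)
The plan is to prove each direction by exploiting the inductive structure of the tower $A_0 = R$, $A_1 = B_R(J_R)$, $A_2 = B_{A_1}(J_{A_1}), \ldots$ of Jacobson-radical blowups.

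For the forward direction, I would first establish two preservation properties of the Arf condition: (i) if $R$ is Arf, then each localization $R_\m$ at a maximal ideal $\m$ is Arf; and (ii) if $R$ is Arf, then $B_R(J_R)$ is again Arf. Property (i) is essentially formal, since integral closure, height one, and stability all localize well. Property (ii) is the technical core: one shows that integrally closed height-one ideals of $A_1$ pull back, up to conductor corrections, to integrally closed ideals of $R$ whose stability transfers forward to $A_1$. Granted these two preservations, a straightforward induction gives that every $(A_i)_P$ is Arf. Now the maximal ideal $\n$ of any such local ring is itself integrally closed: if $y$ satisfies $y^m + a_1 y^{m-1} + \cdots + a_m = 0$ with $a_j \in \n^j$, reducing modulo $\n$ forces $y \in \n$. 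By the Arf property $\n$ is then stable, i.e. $y\n = \n^2$ for some $y \in \n$, and hence $\n^k = y^{k-1}\n$ for all $k \geq 1$. A length computation on $\n^k/\n^{k+1}$ finally gives $e((A_i)_P) = \mu(\n) = \mathrm{embdim}((A_i)_P)$.

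For the converse, assume every infinitely near local ring has minimal multiplicity, and fix an integrally closed height-one ideal $I$ of $R$; the goal is to produce $x \in I$ with $xI = I^2$. After localizing we may assume $R$ is local. Since $\overline R$ is module-finite over $R$ and coincides with $A_l$ for some $l$, the extension $IA_N$ becomes invertible in $A_N$ for some index $N$. I would then descend by induction on $N$: at each step the minimal multiplicity hypothesis at the relevant localizations of $A_i$ forces $J_{A_i}$ to be stable, so each Jacobson-radical blowup is explicitly controlled, and tracking the strict transform of $I$ through one blowup at a time eventually extracts the required stabilizing element in $R$.

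The main obstacle is exactly this descent step. Going from stability of $IA_1$ in $A_1$ back to stability of $I$ in $R$ requires controlling how generators pull back and how the conductor $\cond_R(A_1)$ intervenes; concretely, the element $x \in I$ witnessing $xI = I^2$ must be assembled from a generator of $IA_1$ together with the stability of $J_R$ provided by minimal multiplicity of $R$ itself. Formalizing this bookkeeping---showing that minimal multiplicity at every infinitely near local ring forces stability of every integrally closed height-one ideal of the base---is the technical heart of Lipman's argument in \cite[\S2]{lipman1971stable}, and is where I expect the real work to lie.
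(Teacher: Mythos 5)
The paper does not prove this statement at all: it is quoted verbatim from Lipman (\cite[Theorem 2.2]{lipman1971stable}) and used as a black box, so there is no internal argument to compare yours against. Measured on its own terms, your proposal correctly identifies the architecture of Lipman's proof --- induct along the tower $A_0=R$, $A_{i+1}=B_{A_i}(J_{A_i})$, and use the equivalence ``maximal ideal stable $\iff$ minimal multiplicity'' at each local ring (this is \cite[Corollary 1.10]{lipman1971stable}, which the paper itself invokes in the proof of its Corollary on the inductive characterization of Arf rings). Your length computation deducing $e(\n)=\mu(\n)$ from $y\n=\n^2$ is also fine, granted that $y$ is a regular element, which holds in the one-dimensional Cohen--Macaulay setting.

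However, the proposal has two genuine gaps, and they are precisely the nontrivial implications of the theorem. First, your property (ii), that $R$ Arf implies $B_R(J_R)$ is Arf, is not a formal preliminary: it is essentially equivalent in difficulty to the theorem itself, and your one-sentence description (``integrally closed ideals of $A_1$ pull back, up to conductor corrections, to integrally closed ideals of $R$'') does not explain why stability of the pullback transfers to the original ideal of $A_1$, nor why the pullback is integrally closed. Second, and more seriously, the converse direction is explicitly left open: you state that extracting a stabilizing element $x\in I$ from stability of $IA_N$ plus stability of each $J_{A_i}$ is ``where I expect the real work to lie.'' That descent --- showing that if every infinitely near local ring has a stable maximal ideal then \emph{every} integrally closed height-one ideal of $R$ is stable --- is the actual content of Lipman's Theorem 2.2, and no mechanism for it is supplied (for instance, you would need the fact that an integrally closed ideal $I$ with $I\supseteq J_R$ contains a multiple of $J_R$ in a controlled way, and an induction on the length of $\overline{R}/R$ or on the index $N$ at which $IA_N$ becomes principal). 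As written, the proposal is a correct roadmap of the proof rather than a proof.
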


\begin{prop}\label{stablechar}
Let $R$ be a semi-local, Cohen-Macaulay, equidimensional ring of dimension one. Let $J=J_R$ and $S=J:_{Q(R)}J=\End_R(J)$. The following are equivalent:
\begin{enumerate}

\item $J$ is stable.
\item $J=xS$ for some (regular) $x\in J$.
\item $J\cong S$ as $R$-modules. 
\item $J$ is self-dual as an $R$-module ($J\cong J^R=\Hom_R(J,R)$).  
\item $S$ is self-dual as an $R$-module. 
\end{enumerate}

\end{prop}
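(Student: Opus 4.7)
The plan is to handle $(1) \Leftrightarrow (2) \Leftrightarrow (3)$ by direct manipulation of $J$ and $S$, and then deduce $(3) \Leftrightarrow (4) \Leftrightarrow (5)$ by identifying the relevant $R$-duals with fractional ideals inside $Q(R)$. For $(1) \Leftrightarrow (2)$, the relation $xJ = J^2$ rearranges, after dividing by the regular element $x$ in $Q(R)$, to $x(J:J) = J$, i.e.\ $J = xS$; this is \cite[Lemma 1.11]{lipman1971stable}. For $(2) \Rightarrow (3)$, multiplication by $x$ is an injective (because $x$ is regular) $R$-linear map $S \to xS = J$, hence an isomorphism. Conversely, an $R$-isomorphism $\phi \colon S \to J$ is automatically $S$-linear by \Cref{birHom} (both are MCM $S$-modules over the finite birational extension $R \subseteq S$), so $\phi(s) = s\phi(1)$; setting $x = \phi(1) \in J$ gives $J = xS$ with $x$ regular by injectivity of $\phi$.

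For the remaining equivalences, I would first identify $\Hom_R(J, R) \cong R :_{Q(R)} J$ and $\Hom_R(S, R) \cong \cond_R(S)$ as $R$-modules (in fact as $S$-modules), via evaluation at any regular element in the source. The core step is to show $R :_{Q(R)} J \cong_R S$ and $\cond_R(S) \cong_R J$, which then reduces both $(4)$ and $(5)$ to $(3)$. The first isomorphism follows from \Cref{faber}: applied to the $S$-module $J$, it yields $\tr(J) = J \cdot (R :_{Q(R)} J) \subseteq \cond_R(S)$; a local analysis at each maximal ideal $\m$ then shows $\cond_{R_\m}(S_\m) = \m R_\m$ whenever $R_\m$ is non-regular (since $J \subseteq \cond_R(S)$ always, while any $r \in \cond_{R_\m}(S_\m)$ is a non-unit in $S_\m$, hence in $\m R_\m$), so $R :_{Q(R)} J \subseteq J:J = S$ and equality follows. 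The regular-local case is automatic because $J_\m$ is principal there. The argument for $\cond_R(S) \cong J$ is parallel.

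The main subtlety I foresee is the semi-local case where some $R_\m$ are regular and others are not: the literal equalities $\cond_R(S) = J$ and $R :_{Q(R)} J = S$ may fail globally, and only the abstract $R$-module isomorphism persists. This is handled by invoking \Cref{birHom} once more to observe that any $R$-isomorphism between two $S$-submodules of $Q(R)$ is multiplication by an element of $Q(R)$, thereby translating $(3)$, $(4)$, $(5)$ into the principality over $S$ of the fractional ideals $J$, $R :_{Q(R)} J$, and $\cond_R(S)$, respectively; principality can then be checked locally, since rank-one projective modules over semi-local rings are free.
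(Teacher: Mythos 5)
Your proof of $(1)\Leftrightarrow(2)\Leftrightarrow(3)$ is essentially the paper's own argument: Lipman's Lemma~1.11 for $(1)\Leftrightarrow(2)$, and \Cref{birHom} to upgrade an $R$-isomorphism $S\to J$ to multiplication by a regular element for $(2)\Leftrightarrow(3)$. For the remaining equivalences you diverge genuinely from the paper. The paper simply observes that $J^R\cong \End_R(J)\cong S$ and that $J$ is reflexive, by reducing to the known local statement for the maximal ideal of a one-dimensional Cohen--Macaulay local ring (\cite[Corollary~3.2]{dms}), whence $J^R\cong S$ and $S^R\cong J$ and the equivalences $(3)\Leftrightarrow(4)\Leftrightarrow(5)$ are formal. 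You instead re-prove these identifications by hand: realizing $J^R$ as $R:_{Q(R)}J$ and $S^R$ as $\cond_R(S)$, computing both locally (using \Cref{faber} and the equality $\cond_{R_\m}(S_\m)=\m R_\m$ at non-regular $\m$), and then gluing over the semi-local ring by reducing everything to principality of rank-one locally free fractional $S$-ideals. This is more work but also more self-contained, and it is commendably explicit about the one point the paper glosses over, namely that in the semi-local case the literal equalities $R:_{Q(R)}J=S$ and $\cond_R(S)=J$ can fail at the regular maximal ideals and only an abstract isomorphism survives; your local-to-global mechanism (an $R$-isomorphism of fractional $S$-ideals is multiplication by an element of $Q(R)$, and rank-one projectives over a semi-local ring are free) is the right fix. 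The only step you should make explicit is the fact that $\End_{R_\m}(\m R_\m)\supsetneq R_\m$ whenever $R_\m$ is not a DVR (e.g.\ because $\m R_\m(\m R_\m)^{-1}=\m R_\m$ there and $\ell(\m^{-1}/R)=\type R>0$), which is what forces $\cond_{R_\m}(S_\m)\subseteq\m R_\m$ in your local analysis; without it that inclusion is unsupported. With that supplied, your argument is complete.
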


\begin{proof}
We first prove $(1)\iff (2)\iff (3)$. By \Cref{birHom}, $J\cong_R S$ if and only if $J \cong_S S$ if and only if $J=xS$ for some $x\in S$ if and only if $J=xS$ for some $x\in J$, and such $x$ must be regular.  Thus $(2)\iff (3)$. If $(2)$ holds, then $x^{-1}J=S$ is a ring, so $J$ is stable by \cite[Lemma 1.11]{lipman1971stable}. If $J$ is stable, then $B_R(J)=S$ and hence $J=JS=xS$ for some $x\in S$, by \cite[Proposition 1.1, Definition 1.3]{lipman1971stable}.

Finally, note that $S\cong \Hom_R(J,J) \cong \Hom_R(J,R)$ and $J$ is reflexive. Both of these statements hold because  at a maximal ideal $\m$, $J_{\m}=\m R_{\m}$ and such statements holds for the maximal ideal of a local Cohen-Macaulay one-dimensional ring, see \cite[Corollary 3.2]{dms}.  Thus $J^R\cong S$ and $S^R\cong J$, and so evidently $(3)\iff (4)\iff (5)$. 
\end{proof}

\begin{cor}\label{corinduct}
Let $R$ be a semi-local, Cohen-Macaulay, equidimensional ring of dimension one. Let $J=J_R$ and $S=\End_R(J)$. The following are equivalent:
\begin{enumerate}
\item $R$ is Arf.
\item $J$ is stable and $S$ is Arf. 
\item $S$ is self-dual as an $R$-module ($S\cong S^R$) and $S$ is Arf. 
\end{enumerate}
\end{cor}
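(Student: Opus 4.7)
The equivalence of (2) and (3) is immediate from \Cref{stablechar}, since conditions (1) and (5) of that proposition already give ``$J$ stable $\iff$ $S$ self-dual as an $R$-module''. The substantive content is therefore (1)$\iff$(2), and the plan is to deduce it from Lipman's characterization \Cref{LipThm} combined with one structural observation: once $J$ is stable, \cite[Lemma 1.11]{lipman1971stable} yields $A_1 = B_R(J) = J:_{Q(R)}J = S$, so the tower of birational extensions built from $R$ begins $R = A_0, S = A_1, A_2, \ldots$. Consequently, the set of local rings infinitely near $R$ decomposes as the local rings of $R$ itself together with the local rings infinitely near $S$.

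For (1)$\implies$(2), the Jacobson radical $J$ is an intersection of maximal ideals, hence integrally closed, so the Arf hypothesis forces $J$ to be stable. Given this, the tower identification above shows that every local ring infinitely near $S$ is already a local ring infinitely near $R$, hence has minimal multiplicity by \Cref{LipThm} applied to $R$; applying \Cref{LipThm} in the reverse direction to $S$ then shows $S$ is Arf.

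For (2)$\implies$(1), I would pick $x \in J$ with $xJ = J^2$ and localize at any maximal ideal $\m$ of $R$: since $x \in J \subseteq \m$, this becomes $x(\m R_\m) = (\m R_\m)^2$, so the maximal ideal of $R_\m$ is stable, which for a one-dimensional Cohen--Macaulay local ring is equivalent to minimal multiplicity. Combining this with \Cref{LipThm} applied to the Arf ring $S$, which supplies minimal multiplicity for every local ring appearing in $A_1, A_2, \ldots$, yields minimal multiplicity for every local ring infinitely near $R$; a final application of \Cref{LipThm} concludes that $R$ is Arf. The only potentially delicate step is the identification $A_1 = S$ under stability of $J$, which is precisely where \cite[Lemma 1.11]{lipman1971stable} is needed; the remainder is clean bookkeeping against Lipman's theorem.
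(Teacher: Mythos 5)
Your proposal is correct and follows essentially the same route as the paper: the (2)$\iff$(3) equivalence via \Cref{stablechar}, the identification of the local rings infinitely near $R$ as the localizations of $R$ together with those infinitely near $S=A_1$, and the reduction of (2)$\implies$(1) to checking minimal multiplicity at each maximal ideal of $R$ by localizing the stability of $J$ (the paper cites \cite[Corollary 1.10]{lipman1971stable} for the step ``stable maximal ideal implies minimal multiplicity,'' which you assert without reference, and the identification $A_1=S$ is just the definition of stability rather than an application of Lemma 1.11 --- both cosmetic points).
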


\begin{proof}
If $R$ is Arf, then $J$ is stable and $S$ is Arf since any local ring infinitely near $S$ is also infinitely near $R$. Assume $(2)$, then we just need to show that any local ring $B=R_P$ with $P$ a maximal ideal of $R$ has minimal multiplicity. But $P_P=J_P$ is stable, and  $R_P$ has minimal multiplicity by \cite[Corollary 1.10]{lipman1971stable}.
\end{proof}

\section{Proofs of main results}\label{mainproofs}

In this section we prove the main theorems in the introduction. First, we describe some convenient setups.

\begin{setup}\label{setup}
Let $R$ be a semilocal, equidimensional, Cohen-Macaulay ring of dimension one. Let $J=J_R$ be the Jacobson radical of $R$ and we assume that $R$ is complete with respect to $J$. 
\end{setup}

\begin{setup}\label{setup2}
Assume \Cref{setup}. Further assume that $R$ is reduced. Then the integral closure $\overline R$ of $R$ is a finite $R$-module, and there exist a finite chain of birational extensions $R=A_0 \subsetneq A_1 \subsetneq \dots \subsetneq A_l = \overline R$ with $A_{i+1} =B_{A_i}(J_{A_i})$. Note that if $R$ is Arf, then $A_{i+1}=\End_{A_i}(J_{A_i})$ for each $i$ since  $J_{A_i}$ is $A_i$-stable.

\end{setup}

\begin{lem}\label{refLem2}
Assume \Cref{setup}. Assume further that $J$ is stable. Let $S=\End_R(J)$. Then $\rf(R) = \ad(R) \cup \rf(S)$.
\end{lem}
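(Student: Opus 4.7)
The plan is to prove both inclusions of $\rf(R)=\ad(R)\cup\rf(S)$, interpreting the right-hand side as closed under direct sums, so that $\rf(R)\subseteq\ad(R)\cup\rf(S)$ means every reflexive $M$ decomposes as $R^n\oplus N$ with $N\in\rf(S)$ (either factor possibly zero). The inclusion $\ad(R)\cup\rf(S)\subseteq\rf(R)$ is easy: projective modules are reflexive so $\ad(R)\subseteq\rf(R)$, while stability of $J$ forces $S\cong S^R$ as $R$-modules by \Cref{stablechar}, and then \Cref{refLem1} upgrades every $S$-reflexive module to an $R$-reflexive one, giving $\rf(S)\subseteq\rf(R)$.

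For the harder inclusion $\rf(R)\subseteq\ad(R)\cup\rf(S)$, I would first reduce to the local case. Completeness of $R$ with respect to $J$ lets one lift idempotents of $R/J$ to write $R=\prod_{i=1}^k R_i$ as a product of complete local one-dimensional Cohen--Macaulay rings; this decomposition is compatible with any finitely generated $R$-module $M=\bigoplus_i e_iM$, with $J=\prod\m_i$, and with $S=\prod\End_{R_i}(\m_i)$, and stability of $J_R$ implies stability of each $\m_i$ in $R_i$. So it suffices to treat each factor, and we may assume $R$ is local with maximal ideal $\m=J$. Then by Krull--Schmidt, $M\cong R^n\oplus N$ for some $n\ge 0$ and some $N$ whose indecomposable summands are all non-free.

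It remains to show $N\in\rf(S)$. Since $R$ is local and $R$ is not a summand of $N$, the standard equivalence between $\tr_R(N)=R$ and $R$ being a summand of $N$ yields $\tr_R(N)\subseteq\m=J$. Stability of $J$ gives $J=xS$ for some regular $x\in J$, hence $J\cdot S=xS\cdot S=xS=J\subseteq R$, whence $J\subseteq\cond_R(S)$. Thus $\tr_R(N)\subseteq\cond_R(S)$ and $N$ is reflexive, so \Cref{faber} forces $N$ to be an $S$-module, and then \Cref{refLem1} (applicable since $S\cong S^R$) promotes $R$-reflexivity to $S$-reflexivity, giving $N\in\rf(S)$. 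The main anticipated obstacle is purely bookkeeping around the reduction to the local case, namely verifying that $J$-stability and the construction $S=\End_R(J)$ are compatible with the product decomposition; once one is local, the argument is a short chain of applications of \Cref{stablechar}, \Cref{faber}, and \Cref{refLem1}.
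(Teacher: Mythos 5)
Your proposal is correct and follows essentially the same route as the paper's proof: both use completeness to split $R$ into local factors, the trace-ideal criterion to separate off free summands, \Cref{faber} to land the remainder in $\cm(S)$, and \Cref{refLem1} to upgrade to $\rf(S)$. The only difference is cosmetic — the paper argues directly with an indecomposable $M$ over the semi-local ring rather than reducing fully to the local case, and it leaves implicit the verification $J\subseteq\cond_R(S)$ that you spell out via $J=xS$.
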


\begin{proof}
Projective $R$ modules are reflexive, so \Cref{refLem1} implies that $\ad(R) \cup \rf(S)\subset \rf(R)$ (note that $S$ is self-dual by \Cref{stablechar}). Take any indecomposable $M\in \rf(R)$. Let $\{\m_1,..., \m_s\}$ be the maximal ideals of $R$. Note that as $R$ is complete with respect to $J$, it is equal to the direct product of the localizations $R_i=R_{\m_i}$. Hence if $\tr(M)$ is not contained in some $\m_i$, then $R_i  \in \ad(M)$, so $M=R_i$. Otherwise, $\tr(M)\subset J$, so $M\in \cm(S)$ by \Cref{faber}. Then \Cref{refLem1} again shows that $M\in \rf(S)$, and we are done. 
\end{proof}

We now prove \Cref{th1}. 

\begin{thm}\label{main_thm}
Assume  \Cref{setup2}. Further assume that $R$ is an Arf ring.  Then $\rf(R) = \ad(\bigoplus_0^l A_i)$.  More precisely, each indecomposable reflexive $R$-module is isomorphic to one of the  local rings infinitely near to $R$.
\end{thm}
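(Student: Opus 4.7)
The plan is to induct on the length $l$ of the chain $R = A_0 \subsetneq A_1 \subsetneq \cdots \subsetneq A_l = \overline R$ provided by \Cref{setup2}. In the base case $l = 0$ we have $R = \overline R$, which, being a complete, reduced, semi-local, equidimensional Cohen-Macaulay ring of dimension one equal to its own normalization, is a finite product of complete DVRs. Over such a ring every finitely generated reflexive module is free, so $\rf(R) = \ad(R) = \ad(A_0)$, and both claims hold trivially.

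For the inductive step I would exploit the two preliminary results developed just above. Since $R$ is Arf, $J_R$ is integrally closed and hence stable, so \Cref{refLem2} gives the key decomposition $\rf(R) = \ad(R) \cup \rf(S)$ with $S = \End_R(J_R) = A_1$. By \Cref{corinduct}, $S$ is again Arf, and it inherits the remaining hypotheses of \Cref{setup2} in a routine way: it is a finite birational extension of $R$, hence semi-local, equidimensional Cohen-Macaulay of dimension one; it sits inside $\overline R$, so it is reduced; and being a finite module over the $J_R$-adically complete ring $R$, it is $J_R$-adically complete, which (since $J_R S$ and $J_S$ have the same radical in $S$) forces it to be $J_S$-adically complete. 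The associated chain for $S$ is $S = A_1 \subsetneq A_2 \subsetneq \cdots \subsetneq A_l$, of length $l-1$, so the induction hypothesis yields $\rf(S) = \ad(\bigoplus_{i=1}^l A_i)$. Combining, $\rf(R) = \ad(R) \cup \ad(\bigoplus_{i=1}^l A_i) = \ad(\bigoplus_{i=0}^l A_i)$, which is the first assertion.

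For the \emph{more precisely} part, completeness forces each $A_i$ to split as a finite product of its complete local factors $(A_i)_P$ as $P$ runs over the maximal ideals of $A_i$, and these are by definition the local rings infinitely near to $R$. Hence $\ad(\bigoplus_{i=0}^l A_i)$ is generated, up to isomorphism and taking summands, by the local rings $B$ infinitely near to $R$, and I would conclude by observing that each such $B$ is indecomposable as an $R$-module: \Cref{birHom} gives $\End_R(B) = \End_B(B) = B$, which is local, so $B$ has no nontrivial idempotent endomorphism. The main obstacle in this argument is the careful verification that the hypotheses of \Cref{setup2} descend from $R$ to $S$ --- in particular, the translation between $J_R$-adic and $J_S$-adic completeness and the check that the chain for $S$ is the obvious truncation of the chain for $R$ --- but once this is granted, the induction runs smoothly on the strength of \Cref{refLem2} and \Cref{corinduct}.
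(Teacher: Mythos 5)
Your proposal is correct and follows essentially the same route as the paper: the paper also iterates \Cref{refLem2} along the chain $A_0\subsetneq\cdots\subsetneq A_l$ (using that each $A_i$ is again Arf) and then uses completeness to split each $A_i$ into its local factors, which are the infinitely near local rings. Your version merely formalizes the "repeat this process" step as an explicit induction and adds the routine verifications (descent of \Cref{setup2} to $S$, indecomposability of each local factor via \Cref{birHom}), which are welcome but not a different argument.
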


\begin{proof}
Let $J=J_R$ and $A_1=\End_R(J)$. Since $R$ is Arf, $J$ is stable.  Then \Cref{refLem2} shows that $\rf(R) = \ad(R)\cup \rf(A_1)$. As each $A_i$ is Arf by \Cref{LipThm} , we can repeat this process to arrive at the conclusion. Since each $A_i$ is complete, they are direct sums of their local rings at maximal ideals, which are precisely the local rings infinitely near to $R$.
\end{proof}

\begin{cor}\label{unrami}
Let $R$ be a local, analytically unramified Arf ring. Then there exist a finitely generated module $M$ such that $\rf{R}\subset \ad{M}$. 
\end{cor}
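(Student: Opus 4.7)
The plan is to reduce to the complete semi-local case of \Cref{main_thm} by passing to the $\m$-adic completion $\widehat R$ and descending via the faithful flatness of $R\to\widehat R$. First, $\widehat R$ is a complete, local, reduced (by analytic unramifiedness), one-dimensional Cohen--Macaulay ring, and one checks it remains Arf: by \Cref{LipThm} this amounts to observing that the birational tower $R=A_0\subsetneq\cdots\subsetneq A_l=\overline R$ (finite because $\overline R$ is $R$-finite) base-changes along $R\to\widehat R$ to the tower for $\widehat R$ --- a consequence of \Cref{stablechar}(2), which forces $J_{A_i}=x_iA_{i+1}$ for some regular $x_i$, an identity preserved by flat base change --- and that minimal multiplicity of a local ring is detected by its completion.

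Setting $M:=\bigoplus_{i=0}^l A_i$, a finitely generated $R$-module, we then have $\widehat M\cong\bigoplus_{i=0}^l \widehat{A_i}$, and \Cref{main_thm} applied to $\widehat R$ gives $\rf(\widehat R)\subset\ad_{\widehat R}(\widehat M)$. For any $N\in\rf(R)$, completion commutes with $\Hom_R(-,R)$ on finitely generated modules and hence with double duals, so $\widehat N\in\rf(\widehat R)$, and therefore $\widehat N$ is an $\widehat R$-direct summand of $\widehat M^n$ for some $n$.

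The main obstacle --- since $R$ itself need not be Henselian --- is descending this summand relation back to $R$. I would recast ``$N\in\ad(M)$'' as the assertion that $\id_N$ lies in the image of the composition pairing
\[
\mu\colon\Hom_R(M^n,N)\otimes_R\Hom_R(N,M^n)\longrightarrow\End_R(N).
\]
Since $N$ and $M$ are finitely presented, the base change $\mu\otimes_R\widehat R$ is the analogous pairing for $\widehat N$ and $\widehat M^n$, and by flatness $\im(\mu)\otimes_R\widehat R$ coincides with the image of the completed pairing inside $\End_{\widehat R}(\widehat N)=\End_R(N)\otimes_R\widehat R$. Faithful flatness of $\widehat R/R$ then detects membership in $\im(\mu)$ after completion, which follows from the previous step applied to $\id_{\widehat N}$. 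Hence $N\in\ad(M)$, as desired.
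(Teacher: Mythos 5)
Your proof is correct, and its skeleton --- pass to $\widehat R$, apply \Cref{main_thm} there, descend --- is the same as the paper's; the difference lies in how the two supporting steps are carried out. The paper does not exhibit $M$ explicitly: it sets $X$ equal to the direct sum of the local rings infinitely near $\widehat R$ and invokes a lifting result of Takahashi to produce \emph{some} finitely generated $R$-module $M$ with $X\in\ad{\widehat M}$, then cites Leuschke--Wiegand for descending the summand relation. You instead take $M=\bigoplus_{i=0}^l A_i$ and verify that the blow-up tower commutes with completion; this works because each $J_{A_i}$ is stable (so $A_{i+1}=\End_{A_i}(J_{A_i})$, and $\Hom$ of finitely generated modules commutes with the flat map $R\to\widehat R$), because $J_{\widehat{A_i}}=\widehat{J_{A_i}}$ for the module-finite extensions $A_i$, and because stability of $J_{\widehat{A_i}}$ follows from base-changing $J_{A_i}=x_iA_{i+1}$ via \Cref{stablechar}. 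Your descent step --- recasting $N\in\ad(M)$ as $\id_N\in\im(\mu)$ and using that faithful flatness of $R\to\widehat R$ makes $\End_R(N)/\im(\mu)\to\bigl(\End_R(N)/\im(\mu)\bigr)\otimes_R\widehat R$ injective --- is exactly the content of the Leuschke--Wiegand lemma the paper cites, proved by hand. The trade-off: your route produces an explicit generator $\bigoplus_i A_i$ and avoids the external lifting result, at the cost of the tower-base-change verification; the paper's route is shorter and sidesteps that verification but only asserts the existence of $M$. Both arguments are valid.
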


\begin{proof}
Let $\hat R$ be the completion of $R$ with respect to $\m_R$. Let $X$ be the direct sum of all local rings infinitely near $\hat R$. Then there is an $R$-module $M$ such that $X\in \ad{\hat M}$, see for instance \cite[Corollary 3.6]{stable}. It follows that for any reflexive $R$-module $N$, $\hat N \in \ad{\hat M}$, which implies $N\in \ad M$ (see e.g. \cite[Corollary 1.15]{leuschke2012cohen}).  
\end{proof}

We end this section by providing the proof of \Cref{Arfselfdual}. 

\begin{proof}{(of \Cref{Arfselfdual})} First assume $R$ is Arf. To prove $(2)$, we can replace $R$ by its completion by $J_R$, which is still Arf by \cite[Corollary 2.7]{lipman1971stable}. Thus, we are now in \Cref{setup2}. By \Cref{main_thm}, each indecomposable reflexive module is isomorphic to an indecomposable summand $B$ of $A_i$ for some $i$. Then $B$ is self-dual as an $A_i$ module by \Cref{proj}, and thus is self-dual with respect to $R$ by \Cref{refLem1}. 

Now assume $(2)$. Note that the integral closure $\overline R$ of $R$ is module-finite by assumptions, and we will prove by induction on $n(R):=\ell(\bar R/R)$. If $n(R)=0$ then $R$ is regular, and we are done. Else, let $J=J_R$ and $S=J:_{Q(R)}J$. As $J$ is reflexive, it is self-dual, and so is $S\cong J^R$ (see the proof of \Cref{stablechar}). By \Cref{refLem1},  $\rf{S}\subset \rf{R}$ and all reflexive $S$-module are self-dual with respect to  $S$. As $n(S)<n(R)$, $S$ is Arf by induction, and thus $R$ is Arf by \Cref{corinduct}. 

\end{proof}

\section{Further consequences, examples and questions}\label{apps}
We begin this section with some examples of Arf rings. 

\begin{ex}
Let $k$ be a field and $R=k[[x,y]]/(xy)$ with $\m=(x,y)$. Then as in \Cref{main_thm}, $\overline R = A_1= \End_R(\m) = R/(x)\times R/(y)$. The indecomposable reflexive modules are $R, R/(x), R/(y)$ and they are all self-dual. 
\end{ex}

\begin{ex}
Let $k$ be a field. Fix an integer $e\geq 2$. For each $i\geq 0$ let $N_i$ be the numerical semigroup generated by $\{e, ie+1, ie+2,..., ie+{e-1}\}$. Let $R_i = k[[t^a, a\in H_i]]$. Then if $R= R_n$ for some $n\geq 0$, the $i$-th successive blow-up $A_i$ of $R$ (as in \Cref{setup2})  will be $R_{n-i}$. Each of them is local with minimal multiplicity $e$, so $R$ is Arf and $\rf R=\ad \oplus_{0\leq j\leq n} R_j$. 
\end{ex}

Our results have interesting consequences on various related topics. Here we sketch some immediate ones and raise a few obvious questions inspired by them.

First, it would be fun to extend  \Cref{Arfselfdual} to other Krull dimensions. 
\begin{ques}
Can we characterize rings $R$ over which any reflexive module is self $R$-dual?
\end{ques}

Next, even in dimension one, \Cref{mainques} has not been completely resolved, as can be seen in the below example. Thus a complete description of rings of finite reflexive type is still unknown. 

\begin{ex}\label{notArf}
Not all rings of finite reflexive type are Arf rings. An example is $R= \mathbb C[[t^3,t^5]]$ which does not have minimal multiplicity, so can not be Arf. Yet since $R$ is Gorenstein and finite CM type, it has finite reflexive type. Or we can note that as $\End_R(\m_R) = \mathbb C[[t^3,t^5,t^7]]$ has finite CM type, $R$ has finite reflexive type by \cite[Proposition 6.12]{dms}.
\end{ex}

Recall now that when $R$ is a one-dimensional reduced local ring, then $\rf R$ is the same as $\syz\cm R$, the category of syzygies of maximal Cohen-Macaulay $R$-modules. So \Cref{main_thm} gives:

\begin{cor}\label{corsyz}
If $R$ is an analytically unramified Arf ring then $\syz\cm R$ has finite representation type. 
\end{cor}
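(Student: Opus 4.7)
The plan is essentially to combine two inputs that are already in hand. The first is the identification, noted in the paragraph immediately preceding the corollary, that when $R$ is a one-dimensional reduced local ring one has $\syz\cm R = \rf R$. An analytically unramified local ring is automatically reduced and one-dimensional Cohen-Macaulay under the hypotheses inherited from the global setup, so this equality applies verbatim here. The second input is \Cref{unrami}, which asserts that over an analytically unramified Arf local ring $R$ there exists a single finitely generated $R$-module $M$ with $\rf R \subset \ad M$.

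Putting these together, the argument I would write is: since $\syz\cm R = \rf R \subset \ad M$, every indecomposable object of $\syz\cm R$ must appear, up to isomorphism, as an indecomposable summand of $M$. Because $M$ is finitely generated and $R$ is (semi-)local, the Krull--Schmidt theorem ensures $M$ decomposes into only finitely many indecomposables, whence $\syz\cm R$ has finite representation type.

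There is not really a hard step here; the only thing worth being careful about is justifying that the equality $\syz\cm R = \rf R$ is applicable. Concretely, the $\supset$ direction uses that every reflexive module over a one-dimensional Cohen-Macaulay ring is a syzygy (dualize a presentation of $M^R$), while the $\subset$ direction uses that syzygies of finitely generated modules are torsionless, and over a one-dimensional reduced Cohen-Macaulay ring a torsionless maximal Cohen-Macaulay module is reflexive (this is the only place ``reduced'' enters, via the fact that the total ring of fractions is then a product of fields so that torsionless plus depth one forces reflexivity). Once this is cited, the corollary follows in a single line from \Cref{unrami}.
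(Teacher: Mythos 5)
Your proof is correct and follows essentially the same route as the paper: identify $\syz\cm R$ with $\rf R$ for a one-dimensional reduced local ring and then invoke \Cref{unrami} (itself deduced from \Cref{main_thm} via completion) to place everything inside $\ad M$ for a single finitely generated module $M$. The only cosmetic caveat is that over a non-complete local ring the finiteness of indecomposables in $\ad M$ should be attributed to the descent results of \cite[Chapter 1]{leuschke2012cohen} rather than to Krull--Schmidt per se, but this is the same level of detail the paper itself leaves implicit.
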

 
Next, we recall that various notions of ``sizes" of a subcategories of modules or derived categories over a scheme $X$ have been introduced and shown to be deeply related to its singularities. For algebraic references see \cite{rou, DT1, DT2} and especially the introduction of  \cite{Nee}. One of them is the notion of a dimension of a subcategory of $\mod R$, as defined in \cite[Definition 3.3]{DT2}. 

\begin{cor}\label{dimCM}
If $R$ is an analytically unramified Arf ring then the dimension of $\cm R$, in the sense of  \cite[Definition 3.3]{DT2}, is at most one. 
\end{cor}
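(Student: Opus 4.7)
The plan is to upgrade the finite representation type statement of \Cref{corsyz} into a dimension bound via the canonical short exact sequence defining syzygies. Recall that in the Dao--Takahashi formulation, $\dim \cm R \leq 1$ means there is a single module $G$ such that every $N \in \cm R$ occurs as a direct summand of the middle or an outer term of a single short exact sequence whose remaining terms are in $\ad G$.

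First, by \Cref{corsyz}, $\syz\cm R$ has finite representation type, so there exists a finitely generated $R$-module $M$ with $\syz\cm R \subset \ad M$. I would set $G := R\oplus M$ as the generator candidate. For an arbitrary $N \in \cm R$, choose a surjection from a finite free module $R^n \twoheadrightarrow N$ and consider the resulting short exact sequence
\[
0 \to \syz N \to R^n \to N \to 0.
\]
Here $R^n \in \ad G$ trivially, and $\syz N \in \syz \cm R \subset \ad M \subset \ad G$. Thus $N$ is built in one extension step from objects in $\ad G$, which is precisely the condition $N \in [\ad G]_2$ of \cite[Definition 3.3]{DT2}. Since this works for every $N \in \cm R$, the dimension of $\cm R$ is at most one.

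The only subtle point I anticipate is the conventional indexing: the Dao--Takahashi definition has a couple of standard variants (differing by whether $[\ad G]_1$ or $[\ad G]_0$ is the starting layer, and whether one allows summands after taking cones). In any of these, however, ``one short exact sequence with end-terms in $\ad G$'' yields the bound $\dim \leq 1$, so the argument is robust. The main substantive input is \Cref{corsyz}, i.e.\ finite representation type of $\syz\cm R$, which is exactly what \Cref{main_thm} provides in the analytically unramified Arf case.
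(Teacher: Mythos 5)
Your overall strategy is the same as the paper's: reduce to \Cref{corsyz} and then invoke the general principle that finite representation type of $\syz\cm R$ forces $\dim\cm R\le 1$. The paper handles that principle by simply citing the proof of \cite[Proposition 3.7]{DT2}; you try to prove it, and that is where there is a genuine gap. In the Dao--Takahashi construction the second ball $[G]_2$ is obtained by taking direct summands of \emph{middle terms} of short exact sequences whose outer terms lie in the first ball (which is closed under sums, summands and syzygies); it is not closed under cokernels or cosyzygies. Your sequence $\ses{\syz N}{R^n}{N}$ therefore only records the trivial fact that $R^n$ is an extension of $N$ by $\syz N$; it does not place $N$ itself in $[\ad G]_2$. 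The hedge ``whether one allows summands after taking cones'' is exactly the issue: in a triangulated category one rotates triangles for free, but in $\mod R$ rotating a short exact sequence to move the cokernel into the built position costs a cosyzygy, which the balls do not supply. As a sanity check that ``cokernel of a map between objects of $\ad G$'' is not enough: over a DVR one has $k=\cok(R\xrightarrow{\;x\;}R)$, yet $[R]_2=\ad R$ consists only of free modules.

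The standard repair, and essentially what the proof of \cite[Proposition 3.7]{DT2} does, is to dualize into the canonical module so that $N$ lands in the sub (rather than quotient) position and can then be rotated into the middle. Write $(-)^\dagger=\Hom_R(-,\omega)$. For $N\in\cm R$ take a free cover of $N^\dagger$ and apply $(-)^\dagger$ to the resulting syzygy sequence to obtain $\ses{N}{\omega^{n}}{(\syz N^\dagger)^\dagger}$; pulling back along a free cover of the right-hand term (the usual rotation lemma) yields
\[
\ses{\syz\bigl((\syz N^\dagger)^\dagger\bigr)}{N\oplus F}{\omega^{n}}
\]
with $F$ free. Now the left term lies in $\syz\cm R\subseteq\ad M$ and the right term in $\ad\omega$, so $N$ is a direct summand of a middle term of an extension with outer terms in $\ad(M\oplus\omega\oplus R)$, giving $\dim\cm R\le 1$ with generator $G=M\oplus\omega\oplus R$. (In the one-dimensional reduced setting $\omega$ is reflexive, hence already in $\ad M$, so your candidate $G=R\oplus M$ does in the end suffice; but the duality-and-rotation step is the missing content.) Alternatively, you could do as the paper does and just cite the proof of \cite[Proposition 3.7]{DT2} for this implication.
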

 
\begin{proof}
In general, if $\syz\cm R$ is of finite type then $\dim \cm R\leq 1$, see the proof of \cite[Proposition 3.7]{DT2}. So the assertion follows from \Cref{corsyz}.
\end{proof} 

In recent years, the existence of endomorphism rings of finite global dimension, also known as ``non-commutative desingularizations" have attracted a lot of attention, see for instance \cite{Dao, DFI,  DIITVY, DITV,  FMK, IW, SV, V}. Our next corollary states that over Arf rings, one can construct many endomorphism rings of small global dimensions. 

\begin{cor}\label{corNCR}
Let $R$ be a complete, reduced, local Arf ring. Let $\{A_i\}_{i\geq 0}$ be as in \Cref{setup2}. Let  $X_i$ be the direct sum of all local rings infinitely near $A_i$ and $E_i= \End_R(X_i)$. Then the global dimension of each $E_i$ is at most $2$.  
\end{cor}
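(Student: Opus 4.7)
The plan is to reduce the computation to an endomorphism ring intrinsic to $A_i$, and then to a syzygy computation inside $\rf(A_i)$. First, each summand of $X_i$ is a local ring of some $A_j$ with $j\ge i$, so $X_i$ is a maximal Cohen--Macaulay $A_i$-module; by \Cref{birHom} this gives $E_i = \End_R(X_i) = \End_{A_i}(X_i)$. Observing that $A_i$ is itself a semi-local, complete (with respect to its Jacobson radical), reduced, Arf ring, \Cref{main_thm} applied to $A_i$ identifies $\rf(A_i) = \ad(X_i)$, and $A_i \in \ad(X_i)$.

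Write $\Lambda = E_i$ and $F = \Hom_{A_i}(X_i, -)$, so that $F$ is fully faithful on $\ad(X_i)$ with essential image the finitely generated projective $\Lambda$-modules. Any finitely generated $\Lambda$-module $L$ admits a projective presentation of the form $F(Y_1) \xrightarrow{F(\phi)} F(Y_0) \to L \to 0$ coming from a morphism $\phi : Y_1 \to Y_0$ in $\ad(X_i)$. Left exactness of $F$ then yields $\syz^2_\Lambda L \cong F(\ker \phi)$, so the global dimension bound $\gldim \Lambda \leq 2$ will follow as soon as $\ker \phi$ is shown to lie in $\rf(A_i) = \ad(X_i)$.

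The main obstacle, and the technical heart of the argument, is thus to prove that the kernel of a morphism in $\rf(A_i)$ is itself reflexive. Set $N = \im \phi \subseteq Y_0$; then $N$ is torsion-free, hence maximal Cohen--Macaulay in dimension one. Applying $\Hom_{A_i}(-, A_i)$ to $0 \to \ker\phi \to Y_1 \to N \to 0$ produces $0 \to N^{A_i} \to Y_1^{A_i} \to J \to 0$, where $J$ is the image of $Y_1^{A_i} \to (\ker\phi)^{A_i}$; the cokernel $(\ker\phi)^{A_i}/J$ embeds into $\Ext^1_{A_i}(N, A_i)$, which has finite length since $N$ is locally free at every minimal prime of $A_i$. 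Dualizing the short exact sequence $0 \to N^{A_i} \to Y_1^{A_i} \to J \to 0$ in turn, and using $Y_1^{A_iA_i} = Y_1$, we obtain $0 \to J^{A_i} \to Y_1 \to N^{A_iA_i}$; the kernel of $Y_1 \to N^{A_iA_i}$ coincides with $\ker(Y_1 \to N) = \ker\phi$ (since $N \hookrightarrow N^{A_iA_i}$ is injective), so $J^{A_i} = \ker\phi$. Because the $A_i$-dual of the finite-length cokernel $(\ker\phi)^{A_i}/J$ vanishes, the natural map $(\ker\phi)^{A_iA_i} \to J^{A_i} = \ker\phi$ is injective, and its composition with the biduality map $\ker\phi \to (\ker\phi)^{A_iA_i}$ is the identity. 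Hence $\ker\phi \cong (\ker\phi)^{A_iA_i}$, and by \Cref{M**} the module $\ker\phi$ is reflexive, completing the proof.
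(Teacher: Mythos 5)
Your proof is correct, and its skeleton is the same as the paper's: reduce $E_i$ to $\End_{A_i}(X_i)$ via \Cref{birHom}, identify $\rf(A_i)=\ad(X_i)$ by \Cref{main_thm} applied to the (still complete, reduced, Arf) ring $A_i$, and then use Auslander projectivization to express a second syzygy over $\Lambda=E_i$ as $\Hom_{A_i}(X_i,\ker\phi)$ for a morphism $\phi$ in $\ad(X_i)$. Where you diverge is in the justification of the crucial closure property. The paper observes that $\ker\phi$ is a first syzygy of the maximal Cohen--Macaulay module $\im\phi$ and then invokes the identification $\rf = \syz\cm$ for one-dimensional reduced rings (recalled before \Cref{corsyz}, with the syzygy step cited from Iyama--Wemyss). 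You instead prove directly, by a double-dualization argument, that the kernel of a morphism between reflexive modules over a one-dimensional reduced Cohen--Macaulay ring is again reflexive: the point that $\Ext^1_{A_i}(\im\phi,A_i)$ has finite length because $A_i$ is reduced (hence a field at each minimal prime), that finite-length modules have zero $A_i$-dual, and that the resulting map $(\ker\phi)^{A_iA_i}\to\ker\phi$ splits the biduality map, are all verified correctly, and \Cref{M**} then closes the argument. Your version is more self-contained --- it avoids relying on the unproved identification $\rf=\syz\cm$ --- at the cost of a longer computation; the paper's version is shorter but leans on external references. Both are valid.
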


\begin{proof}
We first consider $X_0$, by \Cref{main_thm} $\ad X_0=\rf R= \syz\cm R$ and $E_0=\End_R(X_0)$. By Auslander's projectivization, any second syzygy module $U$ over $E_0$ has the form $U=\Hom_R(X_0, M)$, where $M$ fits into a short exact sequence $\ses{M}{X_0^n}{N}$ with $N\in \cm R$ (see for instance the proof of \cite[Proposition 2.11]{IW}). Thus $M\in \syz\cm R =\ad X_0$ (\cite[Lemma 2.14]{IW}), so $U$ is projective over $E_0$. Thus $\gldim(E_0)\leq 2$.  By \Cref{birHom}, $E_i = \End_R(X_i)= \End_{A_i}(X_i)$ and $\ad X_i=\rf A_i$, so we are done by the first case. 
\end{proof}

Our previous results, especially when compared with \cite[Proposition 3.7]{DT2} and \cite[Proposition 2.11]{IW}, establish remarkable similarities between Arf rings and two-dimensional rational singularities from representation-theoretic and categorical points of view. Thus, it is natural to ask:

\begin{ques}\label{keyques}
Are there more direct relationships between Arf rings and local rings with (isolated) rational singularity of Krull dimension two? For instance, if $R$ is a complete normal domain of dimension $2$ with algebraically closed residue field and rational singularity, can we always find a regular element $x \in \m_R$ such that $R/xR$ is  Arf?
\end{ques}

\end{document}